\newcommand{\Natural}{\mathbb N}
\newcommand{\abs}[1]{\left\vert#1\right\vert}
\newcommand{\set}[1]{\left\{#1\right\}}
\newcommand{\cardinality}[1]{\abs{#1}}
\newcommand{\dist}{\mathop{\mathrm{dist}}\nolimits}
\newcommand{\norm}[1]{\left\Vert#1\right\Vert}
\newcommand{\duality}[1]{\left\langle#1\right\rangle}
\newcommand{\closedball}[1]{B_{#1}}
\newcommand{\LipEmb}[1]{\displaystyle \mathop{\hookrightarrow}_{#1}}
\theoremstyle{plain}
\newtheorem{thm}{Theorem}
\newtheorem{cor}[thm]{Corollary}
\newtheorem{lem}[thm]{Lemma}
\newtheorem{prop}[thm]{Proposition}
\theoremstyle{definition}
\newtheorem{defn}[thm]{Definition}
\newtheorem{prob}[thm]{Problem}
\begin{document}
\title{Low distortion embeddings into Asplund Banach spaces}
\author{Anton\'\i n Proch\'azka$^\dag$}
\address{$^\dag$ Universit\'e Franche-Comt\'e\\
Laboratoire de Math\'ematiques UMR 6623\\
16 route de Gray\\
25030 Besan\c con Cedex\\
France}
\thanks{The first named author was partially supported by  PHC Barrande 2013 26516YG}
\email{antonin.prochazka@univ-fcomte.fr}

\author{Luis S\'anchez-Gonz\'alez$^\ddag$}

\address{$^\ddag$ Departamento de Ingenier{\'i}a Matem{\'a}tica\\ Facultad de CC. F{\'i}sicas y  Matem{\'a}ticas\\ Universidad de Concepci{\'o}n\\ Casilla 160-C, Concepci{\'o}n, Chile}
\thanks{
 The second named author was partially supported by   MICINN Project MTM2012-34341 (Spain) and FONDECYT project 11130354 (Chile). This work started while L. S\'anchez-Gonz\'alez held a post-doc position at Universit\'e Franche-Comt\'e}
\email{lsanchez@ing-mat.udec.cl}

\begin{abstract}
We give a simple example of a countable metric space $M$ that does not embed bi-Lipschitz with distortion strictly less than 2 into any Asplund space. Actually, if $M$ embeds with distortion strictly less than 2 to a Banach space $X$, then $X$ contains an isomorphic copy of $\ell_1$. We also show that the space $M$ does not embed with distortion strictly less than $2$ into $\ell_1$ itself but it does embed isometrically into a space that is isomorphic to $\ell_1$.
\end{abstract}
\maketitle
\section{Introduction}
We say that a Banach space $X$ is \emph{$D$-bi-Lipschitz universal} if every separable metric space embeds into $X$ with distortion at most $D$. The results of \cite{KL}, resp. \cite{Aharoni}, show that $c_0$ is $2$-bi-Lipschitz universal, resp. is not $D$-bi-Lipschitz universal for any $D<2$. In the recent preprint~\cite{Baudier}, F.~Baudier raised the following question: given a $C(K)$ Banach space $X$, what is the least constant $D$ such that $X$ is $D$-bi-Lipschitz universal? 

The goal of the present article is to prove that Asplund $C(K)$ spaces and, more generally, all Asplund Banach spaces can be $D$-bi-Lipschitz universal only if $D\geq 2$.

We obtain our $C(K)$ result as a slight modification of Baudier's elaboration on  Aharoni's original argument that $\ell_1$ does not embed into $c_0$ with distortion strictly less than $2$.
The general result is then obtained by a direct application of the deep ``Zippin's lemma''~\cite[Theorem~1.2]{Zippin}.
The general result follows also from a stronger observation: if $D<2$ and $X$ is $D$-bi-Lipschitz universal, then $\ell_1 \subset X$, which we prove in the Appendix. We also show in the Appendix that although $M$ does not embed into $\ell_1$ with distortion strictly less than $2$, it embeds isometrically into a space that is isomorphic to $\ell_1$.

An immediate corollary of our $C(K)$ result is that the $C(K)$ spaces which are $D$-bi-Lipschitz universal with $D<2$ are isometrically universal for the class of separable Banach spaces. One may ask whether this is true in general.
\begin{prob}
 Assume that $X$ is $D$-bi-Lipschitz universal with $D<2$. Does then every separable Banach space linearly embed into $X$? Does at least $c_0$ linearly embed into $X$?
\end{prob}

Let us also mention that some refined results concerning low distortion embeddings between $C(K)$ spaces can be found in our paper~\cite{ProSan}.


The notation we use is standard. A mapping $f:M \to N$ between metric spaces $(M,d)$ and $(N,\rho)$ is bi-Lipschitz if there are constants $C_1,C_2>0$ such that
$C_1 d(x,y)\leq \rho(f(x),f(y))\leq C_2 d(x,y)$ for all $x,y \in M$. The distortion $\dist(f)$ of $f$ is defined as $\inf \frac{C_2}{C_1}$ where the infimum is taken over all constants $C_1,C_2$ which satisfy the above inequality. We say that $M$ embeds bi-Lipschitz into $N$ with distortion $D$ if there exists such $f:M\to N$ with $\dist(f)=D$. In this case, if the target space $N$ is a Banach space, we may always assume (by changing $f$) that $C_1=1$.
For the following notions and results, see~\cite{DGZ}. 
A Banach space $X$ is called Asplund if every closed separable subspace $Y \subset X$ has separable dual.
A Hausdorff compact $K$ is called scattered if there exists an ordinal $\alpha$ such that the Cantor-Bendixson derivative $K^{(\alpha)}$ is empty. 
A countable Hausdorff compact is necessarily scattered. 
If $K$ is a Hausdorff compact then $C(K)$ is Asplund iff $K$ is scattered. 

\section{Results}
Let $M=\set{\emptyset} \cup \Natural \cup F$ where $F=\set{A\subset \Natural:1\leq \cardinality{A}<\infty}$ is the set of all finite nonempty subsets of $\Natural$.
We put an edge between two points $a,b$ of $M$ iff
$a=\emptyset$ and $b\in \Natural$ or $a\in \Natural$, $b \in F$ and $a\in b$ thus introducing a graph structure on $M$.
The shortest path metric $d$ on $M$ is then given for $n \neq m \in \Natural\subset M$ and $A\neq B \in F$ by 
\begin{center}
\begin{tabular}{cccc}
 $d(\emptyset,n)=1,$& $d(n,m)=2,$ & $d(n,A)=1$ if $n\in A,$ & $d(n,A)=3$ if $n \notin A,$\\
 $d(\emptyset,A)=2,$& $d(A,B)=2$ if $A\cap B \neq \emptyset,$& $d(A,B)=4$ if $A \cap B =\emptyset$.
\end{tabular}
\end{center}

\noindent
Thus $(M,d)$ is a countable (in particular separable) metric space.

\begin{lem}\label{l:CK}
Let $X=C(K)$ for some compact space $K$ and assume that there exist $D\in [1,2)$ and $f:M \to X$ such that 
\[
 d(x,y) \leq \norm{f(x)-f(y)}\leq Dd(x,y).
\]
Then $K$ is not scattered.
\end{lem}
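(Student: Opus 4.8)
The plan is to argue by contraposition: assuming such an $f$ exists, I will manufacture an isomorphic copy of $\ell_1$ inside $X=C(K)$. Since $\ell_1$ is separable while its dual $\ell_1^{*}\cong\ell_\infty$ is nonseparable, such a copy is a closed separable subspace of $C(K)$ with nonseparable dual, so $C(K)$ fails to be Asplund; by the equivalence recalled in the introduction ($C(K)$ is Asplund iff $K$ is scattered) this forces $K$ to be non-scattered. After translating by $-f(\emptyset)$ I may assume $f(\emptyset)=0$, so that $\abs{f(n)(t)}\le\norm{f(n)}\le D$ for every $n\in\Natural$ and $t\in K$, while $\norm{f(A)}\in[2,2D]$ for every $A\in F$.

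The whole point of the hypothesis $D<2$ is that $c:=2-D$ is strictly positive, and the two numerical facts I need survive precisely at level $c$. First, a \emph{separation} fact: for $n\ne m$ one has $\norm{f(n)-f(m)}\ge d(n,m)=2$, so there is $s\in K$ with $\abs{f(n)(s)-f(m)(s)}\ge 2$; since both values lie in $[-D,D]$ with $D<2$ they must have opposite signs and satisfy $\abs{f(n)(s)},\abs{f(m)(s)}\ge 2-D=c$. Second, a \emph{joining} fact coming from the finite sets: for $A\in F$ there is $t_A\in K$ with $\abs{f(A)(t_A)}\ge 2$, and for each $n\in A$ the relation $d(n,A)=1$ gives $\abs{f(n)(t_A)-f(A)(t_A)}\le D$; hence at the single point $t_A$ every $f(n)$ with $n\in A$ has the same sign as $f(A)(t_A)$ and satisfies $\abs{f(n)(t_A)}\ge 2-D=c$. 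Thus each finite set $A$ witnesses a point at which all its members are simultaneously $\ge c$ or simultaneously $\le -c$.

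From these two facts I aim to build an independent family of pairs of open sets: an infinite $H\subseteq\Natural$ and, for $n\in H$ and a fixed $0<c'<c$, the disjoint open sets $U_n^{+}=\set{t\in K:f(n)(t)>c'}$ and $U_n^{-}=\set{t\in K:f(n)(t)<-c'}$, such that every finite sign pattern is realized, i.e. $\bigcap_{n\in P}U_n^{+}\cap\bigcap_{m\in Q}U_m^{-}\ne\emptyset$ for all disjoint finite $P,Q\subseteq H$. Such an independent family produces (after passing to closures and using compactness) a continuous map of $K$ onto the Cantor set $\set{0,1}^{H}$, hence a nonempty perfect subset of $K$; equivalently, testing the differences $f(n)-f(m)$ against the corresponding differences of point masses shows that $(f(n))_{n\in H}$ dominates the $\ell_1$-norm, yielding the desired copy of $\ell_1$. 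The joining fact supplies, for every finite $A\subseteq H$, a point where the members are coherently signed at level $c$, and the separation fact supplies opposite signs pairwise; a compactness-and-Ramsey extraction is meant to upgrade these to simultaneous realizability over one infinite $H$.

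The main obstacle is exactly this sign-coherent simultaneous realization. A crude distance estimate will not reach the full range $D<2$: if one separates two disjoint finite sets $A,B$ using only $\norm{f(A)-f(B)}\ge d(A,B)=4$ together with $\norm{f(A)},\norm{f(B)}\le 2D$, one extracts a single point at which the members of $A$ are $\ge 4-3D$ and those of $B$ are $\le 3D-4$ — a positive margin only when $D<4/3$. To handle all $D<2$ one must avoid this degradation and work at the robust level $c=2-D$, realizing \emph{mixed} positive/negative patterns not at an explicitly estimated point but in the limit, through a Ramsey selection of $H$ and a compactness argument that passes the finite-level coherence to the infinite independent family. Arranging the extraction so that the positive group and the negative group are forced apart \emph{simultaneously}, rather than merely pairwise or with a common sign, is where the real work lies.
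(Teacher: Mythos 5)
Your reduction (produce a copy of $\ell_1$, conclude $C(K)$ is not Asplund, hence $K$ is not scattered) is legitimate and is in fact the route the paper takes in its Appendix (Theorem~\ref{t:l1}); the paper's own proof of the Lemma is different, a transfinite induction showing that every Cantor--Bendixson derivative $K^{(\alpha)}$ is nonempty. But your write-up has a genuine gap, and you say so yourself: the ``sign-coherent simultaneous realization'' of mixed patterns --- producing, for disjoint finite $P,Q\subseteq H$, a single point where all $f(n)$, $n\in P$, are $\geq c'$ and all $f(m)$, $m\in Q$, are $\leq -c'$ --- is exactly the step you do not carry out. Your separation fact only controls pairs, your joining fact only produces points where a finite set is \emph{coherently} signed (all positive or all negative), and you correctly observe that the naive estimate for a bipartition degrades to $4-3D$, which dies at $D=4/3$. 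The appeal to ``a compactness-and-Ramsey extraction'' is not an argument; as it stands the independent family is not constructed, so neither the Cantor set image nor the $\ell_1$ lower bound is established.

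The missing idea is that you should not try to control the \emph{individual signs} of the $f(n)$ at the witnessing point at all; you should control the \emph{differences} $f(a)-f(b)$ across the bipartition. For disjoint $A,B\in F$ one has $d(A,B)=4$, so pick $x^*\in K$ (or a norming functional) with $\duality{x^*,f(A)-f(B)}\geq 4$. Since $d(a,A)=d(b,B)=1$ for $a\in A$, $b\in B$, you get $\norm{f(A)-f(a)}\leq D$ and $\norm{f(B)-f(b)}\leq D$, whence
\[
\duality{x^*,f(a)-f(b)}\;\geq\;\duality{x^*,f(A)-f(B)}-\abs{\duality{x^*,f(A)-f(a)}}-\abs{\duality{x^*,f(B)-f(b)}}\;\geq\;4-2D>0
\]
\emph{simultaneously for all} $a\in A$, $b\in B$, at the single point $x^*$. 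This is the whole point of the auxiliary third level $F$ of the space $M$, and it survives for every $D<2$ with margin $\eta=4-2D$ (not $2-D$). With this in hand your plan closes: applied to $A_n=\set{k_{2i}:i\leq n}$, $B_n=\set{k_{2i-1}:i\leq n}$ for an arbitrary subsequence $(k_i)$, the resulting decreasing sequence of nonempty $w^*$-compact sets $\bigcap_{a\in A_n,\,b\in B_n}\set{x^*:\duality{x^*,f(a)-f(b)}\geq\eta}$ has a common point, so no subsequence of $(f(k))_k$ is weakly Cauchy and Rosenthal's $\ell_1$-theorem gives the copy of $\ell_1$. Alternatively, as in the paper's proof, one can skip $\ell_1$ entirely and run a transfinite induction on $\alpha$ showing $\bigcap_{a\in A,b\in B}X_{a,b}\cap K^{(\alpha)}\neq\emptyset$ for the closed sets $X_{a,b}=\set{x^*\in K:\abs{\duality{x^*,f(a)-f(b)}}\geq\eta}$, using at successor stages that the points $x^*_{i,j}$ obtained for the enlarged sets $A\cup\set{i}$, $B\cup\set{j}$ form an infinite set (because $i\mapsto(\duality{\gamma,f(i)})_\gamma$ is bounded and $\eta$-separated) and hence accumulate in the next derivative.
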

\begin{proof}
We may assume, without loss of generality, that $f(\emptyset)=0$. We will show for any ordinal $\alpha$ that $K^{(\alpha)} \neq \emptyset$.
Let $\eta=4-2D>0$.
For any $i,j \in \Natural$ let $X_{i,j}=\set{x^* \in K: \abs{\duality{x^*,f(i)-f(j)}}\geq \eta}$. These are closed subsets of $K$. 
We will show that for any disjoint $A,B \in F$ and any ordinal $\alpha$ we have
\[
 \bigcap_{a \in A,b\in B} X_{a,b} \cap K^{(\alpha)} \neq \emptyset.
\]
Let us start with $\alpha=0$.
Let $A,B \in F$ be disjoint. We take $x^* \in K$ such that $\abs{\duality{x^*,f(A)-f(B)}}=\norm{f(A)-f(B)}\geq 4$.
Then for any $a \in A$ and any $b\in B$ we have
\[
\begin{split}
 \abs{\duality{x^*,f(a)-f(b)}}&\geq \abs{\duality{x^*,f(A)-f(B)}}-\abs{\duality{x^*,f(A)-f(a)}}-\abs{\duality{x^*,f(B)-f(b)}}\\
&\geq 4-2D=\eta
\end{split}
\]
thus $\displaystyle x^* \in \bigcap_{a \in A,b\in B} X_{a,b} \cap K$.

Let us assume that we have proved the claim for every $\beta<\alpha$.
If $\alpha$ is a limit ordinal then the 
closedness of $X_{i,j}$ implies the claim.
Let us assume that $\alpha=\beta+1$.  
Let us fix two disjoint sets $A,B \in F$. Let $N=1+\max A\cup B$.
By the inductive hypothesis we know that for $N\leq i<j$ there is $x^*_{i,j} \in K$ s.t.
\[
 x^*_{i,j}\in K^{(\beta)}\cap \bigcap_{a\in A \cup \set{i} ,b \in B \cup \set{j}} X_{a,b}.
\]
We put $\Gamma:=\set{x^*_{i,j}:N\leq i<j} \subset K$.
We define $\Phi:\Natural \cap [N,\infty) \to \ell_\infty(\Gamma)$ by $\Phi(i):=(\duality{\gamma,f(i)})_{\gamma \in \Gamma}$.
Then the image of $\Phi$ is an $\eta$-separated countably infinite bounded set.
Indeed $\norm{\Phi(i)}_\infty \leq \norm{f(i)} \leq Dd(i,\emptyset)=D$.
Let $N\le i<j$ then $\norm{\Phi(i)-\Phi(j)}_{\infty} \geq \abs{\duality{x^*_{i,j},f(i)-f(j)}}\geq \eta$.
Thus $\Gamma$ is infinite and therefore $\displaystyle K^{(\beta)}\cap \bigcap_{a \in A,b\in B} X_{a,b}$ is infinite, too.
Now the 
closedness of $\displaystyle \bigcap_{a \in A,b\in B} X_{a,b}$ and compactness of $K^{(\beta)}$ imply our claim.
\end{proof}

\begin{cor}
 If $X=C(K)$ for some compact space $K$ and $(M,d)$ embeds into $X$ with distortion strictly less than $2$, then $X$ is isometrically universal for all separable spaces.
\end{cor}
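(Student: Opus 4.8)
The plan is to reduce the statement to three classical ingredients: Lemma~\ref{l:CK}, which forces $K$ to be non-scattered; a topological fact that a non-scattered compact space admits a continuous surjection onto $[0,1]$; and the Banach--Mazur theorem on the isometric universality of $C([0,1])$.

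First I would unwind the hypothesis. By the normalization discussed after the definition of distortion, the embedding of $(M,d)$ into $X=C(K)$ with distortion $D_0<2$ may be realized by a map $f:M\to C(K)$ satisfying $d(x,y)\le \norm{f(x)-f(y)}\le D\, d(x,y)$ for some $D\in[1,2)$: one rescales so that the lower constant is $1$, and then the upper constant equals $C_2/C_1<2$. This places us exactly in the setting of Lemma~\ref{l:CK}, whence $K$ is not scattered.

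Next I would construct a continuous surjection $\phi:K\to[0,1]$. Since $K$ is not scattered, its Cantor--Bendixson derivatives stabilize at a nonempty \emph{perfect} closed subset $P\subset K$. Inside $P$ one builds a Cantor scheme: using that $K$ is normal and that $P$ has no isolated points, one recursively splits nonempty relatively open pieces into two pieces with disjoint closures, producing a continuous surjection $P\to\{0,1\}^{\Natural}$; composing with the standard surjection $\{0,1\}^{\Natural}\to[0,1]$ gives a continuous surjection $\psi:P\to[0,1]$. By Tietze's extension theorem $\psi$ extends to a continuous map $\phi:K\to[0,1]$, which is still onto because it is already onto on $P$. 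The composition operator $T:C([0,1])\to C(K)$, $Tg=g\circ\phi$, is then a linear isometry, since surjectivity of $\phi$ yields $\norm{g\circ\phi}_\infty=\norm{g}_\infty$ for every $g$. Thus $C([0,1])$ embeds linearly and isometrically into $C(K)$.

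Finally, the Banach--Mazur theorem asserts that every separable Banach space embeds isometrically into $C([0,1])$; composing such an embedding with $T$ exhibits every separable Banach space as an isometric subspace of $C(K)=X$, which is the desired universality. The only genuinely nontrivial step is the middle one, namely extracting a Cantor scheme from the perfect kernel $P$ to obtain the surjection onto $[0,1]$; this is classical, and the remaining steps are the cited Lemma, an elementary norm computation, and the Banach--Mazur theorem.
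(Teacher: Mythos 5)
Your argument is correct and follows essentially the same route as the paper: Lemma~\ref{l:CK} forces $K$ to be non-scattered, a continuous surjection of $K$ onto $[0,1]$ yields an isometric copy of $C([0,1])$ inside $C(K)$ via the composition operator, and the Banach--Mazur theorem finishes. The only difference is that where you sketch the Cantor-scheme construction of the surjection from the perfect kernel, the paper simply cites the Pe\l czy\'nski--Semadeni theorem.
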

\begin{proof}
Since $C(K)$ is not Asplund, $K$ is not scattered. By a result of Pe\l czy\'nski and Semadeni~\cite{PS} there is a continuous surjection of $K$ onto $[0,1]$. Thus $C(K)$ contains isometrically $C([0,1])$ as a closed subspace. The proof is thus finished by the application of Banach-Mazur theorem~\cite{AlbiacKalton}.
\end{proof}

\begin{thm}\label{t:Main}
 Let $X$ be an Asplund space and assume that $(M,d)$ embeds into $X$ with distortion~$D$. Then $D\geq 2$. Consequently, no Asplund space is universal for embeddings of distortion strictly less than $2$ for all separable metric spaces.
\end{thm}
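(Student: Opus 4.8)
The plan is to reduce the general Asplund case back to the $C(K)$ case already handled in Lemma~\ref{l:CK}, using Zippin's lemma as the bridge. I would argue by contradiction. Suppose $(M,d)$ embeds into the Asplund space $X$ with distortion $D<2$. Since the distortion is defined as an infimum, I may fix $\varepsilon>0$ with $D+\varepsilon<2$ and, after rescaling as in the normalization remark of the introduction, obtain a map $f\colon M\to X$ with
\[
 d(x,y)\leq \norm{f(x)-f(y)}\leq (D+\varepsilon)d(x,y),\qquad D+\varepsilon\in[1,2).
\]
Next I would replace $X$ by the closed linear span $Y=\overline{\mathrm{span}}\set{f(x):x\in M}$. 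As $M$ is countable, $Y$ is a separable closed subspace of $X$, so by the very definition of Asplund given in the introduction its dual $Y^*$ is separable; thus $Y$ is a \emph{separable} Asplund space into which $M$ still embeds with distortion strictly less than $2$.

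The key external input is then Zippin's lemma \cite[Theorem~1.2]{Zippin}: every separable Asplund space embeds isometrically into $C(K)$ for some scattered compact $K$ (one may take $K$ a countable compact, e.g.\ an ordinal interval $[0,\alpha]$ with $\alpha$ countable). Composing the isometric embedding $Y\hookrightarrow C(K)$ with $f$ produces $g\colon M\to C(K)$ satisfying $d(x,y)\leq \norm{g(x)-g(y)}\leq (D+\varepsilon)d(x,y)$, where $D+\varepsilon\in[1,2)$ and $K$ is scattered. But Lemma~\ref{l:CK} asserts that the mere existence of such a map forces $K$ to be \emph{non}-scattered, a contradiction. Hence $D\geq 2$, proving the first assertion.

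The ``consequently'' clause is then immediate: were some Asplund space $X$ to be $D$-bi-Lipschitz universal for all separable metric spaces with $D<2$, the separable space $M$ would embed into $X$ with distortion at most $D<2$, contradicting what was just shown. I expect the only genuinely delicate point to be the invocation of Zippin's lemma in exactly the form required, namely an \emph{isometric} embedding of a separable Asplund space into $C(K)$ with $K$ \emph{scattered}: Lemma~\ref{l:CK} is sensitive both to the distortion constant (so the embedding into $C(K)$ must be isometric in order to preserve the bound $D+\varepsilon<2$) and to scatteredness of $K$. Granting that statement, the remaining steps---passing to the separable subspace $Y$, heredity of the Asplund property to separable subspaces, and the normalization of the lower Lipschitz constant to $1$---are entirely routine, and the conclusion follows by a single composition.
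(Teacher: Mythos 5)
Your overall strategy --- reduce to a separable subspace, invoke Zippin, and land in the $C(K)$ case settled by Lemma~\ref{l:CK} --- is the same as the paper's, and the reduction to a separable Asplund subspace and the ``consequently'' clause are handled correctly. However, the step you yourself flag as delicate is where the argument genuinely breaks: the statement you attribute to Zippin, that every separable Asplund space embeds \emph{isometrically} (or even isomorphically) into $C(K)$ for some scattered compact $K$, is false. A $C(K)$ space with $K$ countable compact is $c_0$-saturated, so for instance $\ell_2$ --- a separable Asplund space --- does not embed isomorphically into any such $C(K)$. What Zippin's lemma actually provides (and what the paper quotes) is weaker: an embedding $i\colon X\to Z$ with $\norm{i}\norm{i^{-1}}<1+\varepsilon$ into a larger space $Z$ containing a subspace $Y$ isometric to $C(S)$, $S$ countable, such that every $i(x)$ lies within $\varepsilon\norm{i(x)}$ of $Y$; the image of $X$ is \emph{near} $C(S)$ but not inside it.

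Consequently your single composition $M\to Y\hookrightarrow C(K)$ is not available, and an extra perturbation step is needed. The paper carries this out: after embedding $M$ into $Z$ with distortion $D'=D(1+\varepsilon)<2$ and normalizing $g(\emptyset)=0$, each $g(x)$ satisfies $\norm{g(x)}\leq 2D'$, hence can be replaced by some $f(x)\in Y$ with $\norm{g(x)-f(x)}\leq \eta:=2\varepsilon D'$. The additive error $2\eta$ in all pairwise distances is absorbed because $M$ is uniformly discrete and bounded ($1\leq d(x,y)\leq 4$), yielding an embedding of $M$ into $C(S)$ with distortion at most $\frac{1+4\varepsilon}{1-2\eta}D'$, which is still below $2$ for $\varepsilon$ small. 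This perturbation argument, exploiting the uniform discreteness and boundedness of $M$, is the missing ingredient in your write-up; without it the appeal to Lemma~\ref{l:CK} cannot be made.
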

Since $M$ is countable, we may assume that $X$ is separable.
The proof is then based on Lemma~\ref{l:CK} and the following theorem of Zippin, see~\cite[Theorem~1.2]{Zippin} or \cite[Lemma~5.11]{Rosenthal}.
\begin{thm}
 Let $X$ be a separable Asplund space. Let $\varepsilon>0$. Then there exist a Banach space~$Z$, a countable Hausdorff (in particular scattered) compact $S$, a subspace $Y$ of $Z$ isometric to $C(S)$ and a linear embedding $i:X \to Z$ with $\norm{i}\norm{i^{-1}}<1+\varepsilon$ such that for any $x\in X$ we have 
\[  
\dist_Z(i(x),Y)\leq \varepsilon\norm{i(x)}_Z. 
\] 
\end{thm}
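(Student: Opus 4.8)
The plan is to exploit the defining feature of separable Asplund spaces, namely that $X^*$ is separable or, equivalently, that the dual ball $\closedball{X^*}$ with the weak$^*$ topology is a metrizable compact which is \emph{weak$^*$-fragmented} by the norm: every nonempty weak$^*$-closed subset has relatively weak$^*$-open slices of arbitrarily small norm-diameter. Quantitatively (see \cite{DGZ}) this is the statement that the Szlenk index $\mathrm{Sz}(X)$ is a countable ordinal. The subtlety one must respect is that $K_0:=(\closedball{X^*},w^*)$ need not itself be scattered — for $X=\ell_p$, $1<p<\infty$, it is a non-scattered metrizable compact — so $S$ cannot be obtained by merely restricting the canonical isometric embedding $x\mapsto\widehat{x}$, $\widehat{x}(x^*)=\duality{x^*,x}$, of $X$ into $C(K_0)$ to a countable weak$^*$-compact norming set: no such set exists, since $C(S)$ with $S$ countable is $c_0$-saturated while $\ell_p$ contains no copy of $c_0$. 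This is precisely why the conclusion is phrased through an ambient $Z$ and the slack estimate $\dist_Z(i(x),Y)\le\varepsilon\norm{i(x)}$ rather than as a genuine embedding of $X$ into $C(S)$.

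Fix $\varepsilon>0$. First I would run the weak$^*$ fragmentation of $K_0$ as a transfinite derivation: peel off, level by level, the union of the relatively weak$^*$-open slices of norm-diameter $<\varepsilon$, obtaining a decreasing transfinite chain of weak$^*$-compacta that reaches $\emptyset$ after countably many steps exactly because $\mathrm{Sz}(X)<\omega_1$. As $K_0$ is weak$^*$-metrizable and separable, only countably many slices are needed at each level, so the derivation is encoded by a well-founded tree $T$ of countable height and countable branching, hence countable. From $T$ I would build the scattered compact $S$ in its derivation topology, with Cantor–Bendixson rank controlled by $\mathrm{Sz}_\varepsilon(X)$; being metrizable and scattered, $S$ is automatically countable, as required.

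The fragmentation also furnishes, in the ideal case, a surjection $\phi\colon K_0\to S$ whose fibres are the norm-small slices. Were $\phi$ continuous, everything would follow at once with $Z:=C(K_0)$ and $i$ the canonical isometric embedding: the composition operator $\phi^*\colon C(S)\to C(K_0)$, $g\mapsto g\circ\phi$, is then an isometry, so $Y:=\phi^*(C(S))\cong C(S)$ sits isometrically in $Z$, and for $s,s'$ in a common fibre one has $\abs{\widehat{x}(s)-\widehat{x}(s')}=\abs{\duality{s-s',x}}\le\norm{s-s'}_{X^*}\norm{x}<\varepsilon\norm{x}$, whence $\dist_Z(\widehat{x},Y)\le\varepsilon\norm{x}=\varepsilon\norm{i(x)}$ (here one could even take $\norm{i}\norm{i^{-1}}=1$).

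The genuine obstacle is that the map produced by the Szlenk derivation is in general only of the first Baire class, not continuous, so the naive $\phi^*$ does not land in $C(K_0)$. Overcoming this is exactly where the target must be enlarged: I would replace $C(K_0)$ by a suitable $Z$, constructed so as to absorb the discontinuities of the derivation at the cost of perturbing the canonical isometry into an embedding $i$ with $\norm{i}\norm{i^{-1}}<1+\varepsilon$, inside which a continuous model of the scattered $S$ still yields an isometric $Y\cong C(S)$ while $i(X)$ is pushed to within $\varepsilon\norm{i(x)}$ of $Y$. Carrying out this enlargement while keeping $S$ countable — possible precisely because $\mathrm{Sz}(X)<\omega_1$ forces the derivation to stop at a countable ordinal — and keeping all the estimates uniform over $x$ is the technical heart of the argument and the step I expect to be hardest; it is what makes the statement a genuine theorem of Zippin rather than a soft consequence of weak$^*$ fragmentability.
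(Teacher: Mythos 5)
The paper does not actually prove this statement: it is quoted as Zippin's theorem, with the authors pointing to \cite{Zippin} and \cite{Rosenthal} for the proof. So the only question is whether your sketch could stand on its own, and it cannot: it has a genuine gap exactly where you yourself say you expect the hardest step to be. Your first paragraphs correctly assemble the standard ingredients (separability of $X^*$, weak$^*$ fragmentability of $(\closedball{X^*},w^*)$, countability of the Szlenk index, the observation that the dual ball itself need not be scattered, the failure of continuity of the map produced by the derivation). But the final paragraph, which is where the theorem actually lives, contains no construction. ``I would replace $C(K_0)$ by a suitable $Z$, constructed so as to absorb the discontinuities of the derivation'' is a restatement of the conclusion, not an argument: you do not say what $Z$ is, how the isometric copy $Y$ of $C(S)$ sits inside it, how $i$ is defined, or why $\dist_Z(i(x),Y)\leq\varepsilon\norm{i(x)}_Z$ holds uniformly in $x$ with the $(1+\varepsilon)$-isomorphism constant preserved. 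Zippin's actual argument is a delicate transfinite construction built on the $w^*$-derivation (organizing the derived sets into a biorthogonal/decomposition scheme and gluing the resulting data into an ambient space with the required $C(S)$-subspace); none of that is visible in, or recoverable from, what you wrote.

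There are also smaller soft spots earlier in the sketch. The ``surjection $\phi:K_0\to S$ whose fibres are the norm-small slices'' is not well defined: the slices removed at a given derivation level neither partition the set being derived nor vary in any controlled way, so the ``ideal case'' you describe is not a degenerate instance of the real situation but a different and generally vacuous one. Likewise, the passage from the well-founded tree $T$ to a compact scattered $S$ needs an explicit topology and a compactness check before ``metrizable and scattered, hence countable'' can be invoked. None of this would matter if the main construction were supplied, but as it stands the proposal is an accurate explanation of why the theorem is hard rather than a proof of it; for the purposes of this paper the honest course is the one the authors take, namely to cite Zippin.
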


\begin{proof}[Proof of Theorem \ref{t:Main}]
Let us assume that $D<2$.
Let $\varepsilon>0$ be small enough so that $D'=D(1+\varepsilon)<2$ and also that for $\eta:=\varepsilon2D'$ we have $\displaystyle \frac{1+4\varepsilon }{1-2\eta}D'<2$.
Then $(M,d)$ embeds into $Z$ with distortion $D'<2$ via some embedding $g$ such that $d(x,y)\leq \norm{g(x)-g(y)}\leq D'd(x,y)$. We may assume, without loss of generality, $g(\emptyset)=0$. Thus for every $x \in M$ we have $\norm{g(x)}\leq 2D'$.
We know that for each $x \in M$ there is $f(x) \in Y$ such that $\norm{g(x)-f(x)}\leq \eta$.
This implies that $\norm{g(x)-g(y)}-2\eta\leq \norm{f(x)-f(y)}\leq \norm{g(x)-g(y)}+2\eta$.
Now since $1\leq d(x,y)$ we have 
\[
 d(x,y)(1-2\eta)\leq \norm{f(x)-f(y)}\leq d(x,y)D'(1+4\varepsilon).
\]
This proves that $f$ is a bi-Lipschitz embedding of $M$ into $C(S)$ with distortion strictly less than $2$ which is impossible according to Lemma~\ref{l:CK}.
\end{proof}

\section{Appendix}
\begin{thm}\label{t:l1}
Let $X$ be a Banach space and suppose that there exist $D\in [1,2)$ and $f:M\to X$ such that $d(x,y)\leq \norm{f(x)-f(y)}\leq Dd(x,y)$. Then $X$ contains a copy of $\ell_1$.
\end{thm}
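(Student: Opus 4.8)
The plan is to exhibit an explicit copy of $\ell_1$ inside $X$, spanned by the pairwise differences $u_k:=f(2k)-f(2k-1)$, $k\in\Natural$. The upper $\ell_1$-estimate is immediate: since $\norm{u_k}\leq D\,d(2k,2k-1)=2D$, the triangle inequality gives $\norm{\sum_k a_k u_k}\leq 2D\sum_k\abs{a_k}$ for every finitely supported scalar sequence $(a_k)$. The whole content is therefore the matching lower estimate $\norm{\sum_k a_k u_k}\geq\eta\sum_k\abs{a_k}$ with $\eta:=4-2D>0$, which I would extract from a single well-chosen functional.

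First I would transport the separation estimate from the proof of Lemma~\ref{l:CK} to an arbitrary Banach space. Given disjoint $A,B\in F$, the Hahn--Banach theorem furnishes $x^*\in X^*$ with $\norm{x^*}=1$ and $\duality{x^*,f(A)-f(B)}=\norm{f(A)-f(B)}\geq d(A,B)=4$. Writing $f(a)-f(b)=(f(A)-f(B))-(f(A)-f(a))-(f(b)-f(B))$ and using $\norm{f(A)-f(a)}\leq D$ and $\norm{f(b)-f(B)}\leq D$ (because $d(a,A)=d(b,B)=1$), exactly as in Lemma~\ref{l:CK} I obtain $\duality{x^*,f(a)-f(b)}\geq 4-2D=\eta$ for every $a\in A$ and $b\in B$ simultaneously.

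Now fix finitely supported scalars $(a_k)$, not all zero, and split the indices by sign: put $A:=\set{2k:a_k>0}\cup\set{2k-1:a_k<0}$ and $B:=\set{2k-1:a_k>0}\cup\set{2k:a_k<0}$. These are disjoint nonempty finite subsets of $\Natural$, so the previous step provides a norm-one $x^*$ separating all pairs $f(a)-f(b)$, $a\in A$, $b\in B$, by at least $\eta$. The sign-dependent choice of the partition is designed so that, term by term, $a_k\duality{x^*,f(2k)-f(2k-1)}\geq\abs{a_k}\eta$: when $a_k>0$ the ordered pair $(2k,2k-1)$ lies in $A\times B$, and when $a_k<0$ the ordered pair $(2k-1,2k)$ lies in $A\times B$, so in both cases multiplying by $a_k$ yields a nonnegative contribution of size at least $\abs{a_k}\eta$. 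Summing gives $\norm{\sum_k a_k u_k}\geq\duality{x^*,\sum_k a_k u_k}\geq\eta\sum_k\abs{a_k}$, whence $(u_k)$ is equivalent to the unit vector basis of $\ell_1$ and $X\supseteq\overline{\mathrm{span}}\set{u_k}\cong\ell_1$.

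The only genuine obstacle is that the separating functional controls the differences $f(a)-f(b)$ but leaves the common level of $x^*$ on the two clusters $f(A)$ and $f(B)$ entirely free; evaluating $x^*$ directly on an unbalanced combination $\sum_k a_k f(k)$ would therefore leave an uncontrolled offset term proportional to $\sum_k a_k$. Passing to the centered differences $u_k=f(2k)-f(2k-1)$ and letting the partition depend on the signs of the $a_k$ is precisely what cancels this offset and makes the termwise estimate robust against signs; once this is arranged, the remainder is the routine two-sided $\ell_1$ comparison above.
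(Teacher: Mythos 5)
Your proof is correct, but it takes a genuinely different route from the paper. The paper invokes Rosenthal's $\ell_1$-theorem: it sets $x_k:=f(k)$, and for an arbitrary subsequence $(k_n)$ builds the nested nonempty $w^*$-compact sets $K_n=\bigcap_{a\in A_n, b\in B_n}\set{x^*\in \closedball{X^*}: \duality{x^*,f(a)-f(b)}\geq 4-2D}$ (with $A_n$, $B_n$ the even- and odd-indexed halves of the subsequence), extracts a single functional $x^*\in\bigcap_n K_n$ by $w^*$-compactness, and concludes that no subsequence of $(x_k)$ is weakly Cauchy. You instead exhibit an explicit $\ell_1$-basis: the differences $u_k=f(2k)-f(2k-1)$ satisfy $\eta\sum_k\abs{a_k}\leq\norm{\sum_k a_k u_k}\leq 2D\sum_k\abs{a_k}$ with $\eta=4-2D$, the lower bound coming from a single Hahn--Banach functional norming $f(A)-f(B)$ for the sign-adapted disjoint sets $A,B\in F$. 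Both arguments rest on the same core separation estimate (the triangle-inequality computation from Lemma~\ref{l:CK} giving $\duality{x^*,f(a)-f(b)}\geq 4-2D$ for all $a\in A$, $b\in B$ simultaneously), and your sign-splitting device is sound: it is exactly what is needed to make every term $a_k\duality{x^*,u_k}$ contribute $\abs{a_k}\eta$ regardless of sign. What your route buys is that it is elementary and quantitative --- it avoids the deep Rosenthal theorem entirely and produces a concrete subspace isomorphic to $\ell_1$ with explicit equivalence constants $2D/(4-2D)$; what the paper's route buys is brevity given the machinery already set up in Lemma~\ref{l:CK}, at the cost of a nonconstructive conclusion. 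The only point worth flagging is the degenerate bookkeeping you already handled implicitly: one must check that $A$ and $B$ are both nonempty and disjoint whenever some $a_k\neq 0$, which holds because each nonzero $a_k$ places $2k$ in one set and $2k-1$ in the other.
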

\begin{proof}
We plan to use the Rosenthal theorem~\cite{AlbiacKalton}. Thus, we have to find a sequence $(x_k) \subset X$ such that none of its subsequences is weakly Cauchy.
We claim that if we put $x_k:=f(k)$, then $(x_k)$ will have this property. 
Indeed, let $(k_n) \subset \Natural$ be given. 
We define $A_N=\set{k_{2n}:n\leq N}$ and $B_N=\set{k_{2n-1}:n\leq N}$ for every $N\in \Natural$.
Let us put $\varepsilon:=4-2D>0$.
Similarly as in the proof of Lemma~\ref{l:CK} we will put
\[
 X_{a,b}=\set{x^* \in \closedball{X^*}:\duality{x^*,f(a)-f(b)}\geq \varepsilon},
\]
and we will show that for every $n \in \Natural$
\[
 K_n:=\bigcap_{a \in A_n,b\in B_n} X_{a,b} \neq \emptyset.
\]
Now observe that $(K_n)$ is a decreasing sequence of non-empty $w^*$-compacts. Thus there exists $x^* \in \bigcap_{n=1}^\infty K_n$.
It is clear that $\duality{x^*,x_{k_{2n}}-x_{k_{2n+1}}}\geq \varepsilon$ for all $n \in \Natural$. Thus $(x_{k_i})_{i=1}^\infty$ is not weakly Cauchy.
\end{proof}

Weirdly, $M$ does not embed well into $\ell_1$ either. We will see this using Enflo's generalized roundness. Let us recall the definition as presented in~\cite{LTW}.
\begin{defn}
A metric space $(X,d)$ is said to have \emph{generalized roundness} $q$, written $q \in gr(X,d)$, if for every $n\geq 2$ and all points $a_1,\ldots,a_n,b_1,\ldots,b_n \in X$ we have
\[
\sum_{1\leq i<j\leq n}\left((d(a_i,a_j)^q+d(b_i,b_j)^q\right) \leq \sum_{1\leq i,j\leq n} d(a_i,b_j)^q.
\]
\end{defn}

\begin{prop}\label{p:roundness}
If $f:(M,d) \LipEmb{D} (X,\delta)$ and there is $0<q \in gr(X,\delta)$, then $D\geq 2$.
\end{prop}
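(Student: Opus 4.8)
The plan is to feed a carefully chosen finite configuration of points of $M$ into the generalized roundness inequality and then let the size of the configuration grow. Generalized roundness is scale invariant: replacing $\delta$ by $\lambda\delta$ multiplies both sides of the defining inequality by $\lambda^q$. Hence, after rescaling, I may assume that $f$ satisfies $d(x,y)\leq \delta(f(x),f(y))\leq L\,d(x,y)$ for all $x,y\in M$, where $L$ can be taken as close to $D$ as we wish; it then suffices to prove $L\geq 2$.

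The heart of the argument is a combinatorial gadget. Fix $n\geq 2$, identify $\set{1,\dots,n}\times\set{1,\dots,n}$ with a subset of $\Natural$, and set $a_i:=\set{(i,j):1\leq j\leq n}$ (the $i$-th row) and $b_j:=\set{(i,j):1\leq i\leq n}$ (the $j$-th column), for $1\leq i,j\leq n$. These are finite nonempty subsets of $\Natural$, hence genuine points of $M$. The rows $a_1,\dots,a_n$ are pairwise disjoint and so are the columns $b_1,\dots,b_n$, whence $d(a_i,a_j)=d(b_i,b_j)=4$ for $i\neq j$; on the other hand each row meets each column in the single point $(i,j)$, so $d(a_i,b_j)=2$ for all $i,j$.

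Now I would apply the generalized roundness inequality, with exponent $q$, to the $2n$ points $f(a_1),\dots,f(a_n),f(b_1),\dots,f(b_n)\in X$. On the left-hand side the lower Lipschitz bound gives $\delta(f(a_i),f(a_j))^q\geq d(a_i,a_j)^q=4^q$ for $i\neq j$, and likewise for the $b$'s, so the left side is at least $2\binom{n}{2}4^q=n(n-1)4^q$. On the right-hand side the upper Lipschitz bound gives $\delta(f(a_i),f(b_j))^q\leq L^q d(a_i,b_j)^q=L^q 2^q$ for all $i,j$, so the right side is at most $n^2 L^q 2^q$. Combining the two estimates yields $n(n-1)4^q\leq n^2 L^q 2^q$, that is, $L^q\geq \frac{n-1}{n}\,2^q$. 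Letting $n\to\infty$ gives $L\geq 2$, and since $L$ may be taken arbitrarily close to $D$ we conclude $D\geq 2$.

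The arithmetic above is routine; the only genuine design decision — and the step I would think about hardest — is producing a configuration whose \emph{within-group} distances are as large as possible while the \emph{between-group} distances stay as small as possible, since it is the ratio of these that bounds $D^q$ from below. In $(M,d)$ the extreme values are $4$ (disjoint sets) and $2$ (intersecting sets), and the grid is precisely the device that simultaneously forces the rows to be mutually disjoint, the columns to be mutually disjoint, and every row to meet every column. This realizes the extremal ratio $4/2=2$, which is exactly what drives the bound to $D\geq 2$.
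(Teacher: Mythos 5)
Your proof is correct. The method is the same as the paper's---apply the generalized roundness inequality to a growing finite configuration in $M$ and let $n\to\infty$---but your witness configuration is different. The paper takes $a_1,\dots,a_n\in\Natural$ (second floor) and $b_i=\set{a_1,\dots,a_n}\setminus\set{a_i}$ (third floor), so the relevant distances are $d(a_i,a_j)=d(b_i,b_j)=2$ and $d(a_i,b_j)=1$ or $3$; the ratio $2$ only emerges asymptotically because the $n$ diagonal terms of size $3^q$ get swamped by the $n(n-1)$ off-diagonal terms of size $1$. Your row/column grid keeps all $2n$ points in $F$ and realizes the extremal ratio $4/2=2$ in every single term, the loss $\frac{n-1}{n}$ coming only from the count mismatch between the $2\binom{n}{2}$ pairs on the left and the $n^2$ pairs on the right; this is arguably cleaner and makes the source of the constant $2$ more transparent. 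Your preliminary remark that generalized roundness is scale-invariant, which justifies normalizing to $C_1=1$ when the target is a general metric space rather than a Banach space, is a detail the paper's proof silently assumes.
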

\begin{proof}
Let $a_1,\ldots,a_n,b_1,\ldots,b_n \in M$. We have
\[
\sum_{1\leq i<j\leq n}\left((\delta(f(a_i),f(a_j))^q+\delta(f(b_i),f(b_j))^q\right) \leq \sum_{1\leq i,j\leq n} \delta(f(a_i),f(b_j))^q
\]
and so
\[
\sum_{1\leq i<j\leq n}\left((d(a_i,a_j)^q+d(b_i,b_j)^q\right) \leq D^q\sum_{1\leq i,j\leq n} d(a_i,b_j)^q.
\]
If $a_1,\ldots,a_n$ are arbitrary in the 2nd floor and $b_i=\set{a_1,\ldots,a_n}\setminus\set{a_i}$ (in the 3rd floor), the above inequality evaluates to
\[
n(n-1)2^q\leq D^q n\left((n-1)+3^q\right),
\]
i.e. $\left(\frac2D\right)^q\leq 1+\frac{3^q}{n-1}$, which is possible for all $n$ only if $D\geq 2$.
\end{proof}

\begin{cor}\label{c:BadEmbedL1}
The metric space $M$ does not embed with distortion strictly less than $2$ into any $L_1(\mu)$. 
\end{cor}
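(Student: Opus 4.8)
The plan is to invoke Proposition~\ref{p:roundness}. Since the threshold for the distortion there is $2$, the corollary follows immediately once we exhibit some $0<q \in gr(L_1(\mu),\norm{\cdot}_1)$. The natural choice is $q=1$, and the claim then reduces to the classical fact that every $L_1(\mu)$ space has generalized roundness $1$ (equivalently, that $\norm{\cdot}_1$ is a kernel of negative type); this is available from~\cite{LTW}, but I would give a self-contained argument along the lines below.

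First I would reduce from $L_1(\mu)$ to the real line. For $f,g \in L_1(\mu)$ one has $\norm{f-g}_1 = \int \abs{f-g}\,d\mu$, so the generalized roundness inequality with $q=1$ for a finite family $a_1,\dots,a_n,b_1,\dots,b_n \in L_1(\mu)$ is obtained by integrating against $\mu$ the corresponding inequality for the scalar values $a_i(t),b_j(t) \in \Real$. Interchanging the finite sums with the integral is legitimate because all the integrands are nonnegative and integrable. Thus it suffices to check that $(\Real,\abs{\cdot})$ has generalized roundness $1$.

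Next I would reduce the real inequality to a two-point computation by means of the identity $\abs{u-v} = \int_{\Real} \abs{\indicator{\{t<u\}} - \indicator{\{t<v\}}}\,dt$. Setting $\phi_u(t)=\indicator{\{t<u\}} \in \set{0,1}$ and integrating in $t$, it is enough to verify the $q=1$ inequality in the case where every point lies in $\set{0,1}$. Grouping the $a_i$ into $p$ zeros and $n-p$ ones, and the $b_j$ into $r$ zeros and $n-r$ ones, the left-hand side equals $p(n-p)+r(n-r)$ while the right-hand side equals $p(n-r)+(n-p)r$; their difference is $(p-r)^2 \ge 0$, which settles the scalar case and hence the claim for $L_1(\mu)$.

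The only genuine content is the generalized roundness of $L_1$ itself; once the two reductions above are in place the remaining step is the trivial inequality $(p-r)^2\ge 0$, so I do not anticipate any real obstacle. The corollary for an arbitrary $L_1(\mu)$ then follows at once by applying Proposition~\ref{p:roundness} with $q=1$.
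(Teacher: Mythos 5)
Your proposal is correct and follows the paper's own route exactly: the corollary is deduced from Proposition~\ref{p:roundness} applied with $q=1$ together with the fact that $1 \in gr(L_1(\mu))$, which the paper simply cites from~\cite{LTW}. Your self-contained verification of that fact (reduction to scalars by integrating pointwise, then to $\set{0,1}$-valued points via the identity $\abs{u-v}=\int_{\Real}\abs{\indicator{\set{t<u}}-\indicator{\set{t<v}}}\,dt$, then the computation showing the difference of the two sides is $(p-r)^2\geq 0$) is also correct, so the only difference is that you prove the cited ingredient rather than quoting it.
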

\begin{proof}
This follows immediately from Proposition~\ref{p:roundness} and from the fact that $1 \in gr(L_1(\mu))$ which is proved in \cite[Corollary 2.6]{LTW}.
\end{proof}

On the other hand $M$ lives isometrically in a space that is isomorphic to $\ell_1$.

\begin{prop}\label{p:l1FreeNorm}
There is an equivalent norm $\abs{\cdot}$ on $\ell_1$ such that $M$ embeds isometrically into $(\ell_1,\abs{\cdot})$.
\end{prop}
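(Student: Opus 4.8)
The plan is to realize the required renorming of $\ell_1$ as the Lipschitz-free space (Arens--Eells space) $\mathcal{F}(M)$ over $M$, based at the point $\emptyset$. Recall the two standard facts about this space: first, $M$ embeds isometrically into $\mathcal{F}(M)$ via the Dirac map $\delta\colon x\mapsto\delta_x-\delta_\emptyset$, that is $\norm{\delta(x)-\delta(y)}_{\mathcal{F}(M)}=d(x,y)$; second, $\mathcal{F}(M)$ is by definition the closed linear span of $\set{\delta(x):x\in M\setminus\set{\emptyset}}$, equipped with the norm $\norm{\mu}=\sup\set{\duality{h,\mu}:h\in\mathrm{Lip}_0(M),\ \norm{h}_{\mathrm{Lip}}\le1}$, where $\mathrm{Lip}_0(M)$ is the space of Lipschitz functions vanishing at $\emptyset$. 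Consequently it suffices to prove that $\mathcal{F}(M)$ is isomorphic to $\ell_1$: transporting $\norm{\cdot}_{\mathcal{F}(M)}$ through such an isomorphism will produce the sought equivalent norm $\abs{\cdot}$ on $\ell_1$ and turn $\delta$ into an isometric embedding of $M$ into $(\ell_1,\abs{\cdot})$.

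To get $\mathcal{F}(M)\cong\ell_1$ I would exploit that $M$ is bounded, with $\diam M\le4$, and uniformly discrete, since any two distinct points lie at distance at least $1$. Consider the linear map $T\colon\ell_1(M\setminus\set{\emptyset})\to\mathcal{F}(M)$ determined by $Te_x=\delta(x)$. The upper estimate $\norm{Ta}\le4\norm{a}_1$ is immediate from the triangle inequality together with $\norm{\delta(x)}=d(\emptyset,x)\le4$.

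The lower estimate is where uniform discreteness enters, and it is the only place needing a short computation. Given a finitely supported $a=(a_x)$, define $h\colon M\to\Real$ by $h(\emptyset)=0$ and $h(x)=\tfrac12\operatorname{sign}(a_x)$ for $x\neq\emptyset$. Since $h$ takes values in an interval of length $1$ while distinct points of $M$ are at distance at least $1$, the function $h$ is $1$-Lipschitz and vanishes at the base point. Testing $Ta=\sum_x a_x\,\delta(x)$ against $h$ gives $\norm{Ta}\ge\duality{h,Ta}=\sum_x a_x h(x)=\tfrac12\norm{a}_1$, so $T$ is bounded below.

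Hence $T$ is an isomorphism onto its range; being bounded below it has closed range, and this range is dense by the very definition of $\mathcal{F}(M)$, so $T$ is surjective. Thus $T$ is an isomorphism of $\ell_1(M\setminus\set{\emptyset})\cong\ell_1$ onto $\mathcal{F}(M)$, and setting $\abs{a}:=\norm{Ta}_{\mathcal{F}(M)}$ yields an equivalent norm on $\ell_1$ for which the map $\emptyset\mapsto0$, $x\mapsto e_x$ is the desired isometric copy of $M$. The only genuine obstacle is the verification of the lower bound, i.e. producing, for every sign pattern, a single $1$-Lipschitz function that separates the masses; everything else is soft functional analysis once the free-space description is in hand. (This is, in essence, the known fact that the free space over a bounded uniformly discrete metric space is isomorphic to $\ell_1$.)
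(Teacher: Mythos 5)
Your proposal is correct, and at the top level it is the same as the paper's: both embed $M$ isometrically into the Lipschitz-free space $\mathcal F(M)$ via the Dirac map and then reduce the statement to the fact that $\mathcal F(M)$ is isomorphic to $\ell_1$. Where you diverge is in the proof of that fact. The paper argues by transfer: it first notes that $\mathcal F(\Natural_0)$ is \emph{isometrically} $\ell_1$ when $\Natural_0=\set{0}\cup\Natural$ carries the metric $\rho(0,n)=1$, $\rho(n,m)=2$, and then observes that any countable bounded uniformly discrete space (in particular $M$) is Lipschitz homeomorphic to $(\Natural_0,\rho)$, so the free spaces are isomorphic by functoriality. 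You instead prove the isomorphism directly: you exhibit $T:\ell_1(M\setminus\set{\emptyset})\to\mathcal F(M)$, $Te_x=\delta(x)$, bounded above by boundedness of $M$ and bounded below by testing against the single $1$-Lipschitz function $h(x)=\tfrac12\operatorname{sign}(a_x)$, which is legitimately $1$-Lipschitz precisely because $M$ is $1$-separated; density of the range then gives surjectivity. Both arguments are sound. Yours is more self-contained (it does not invoke the isometry $\mathcal F(\Natural_0)\equiv\ell_1$ or the Lipschitz invariance of free spaces) and makes the constants explicit; note that since $d(x,\emptyset)\le 2$ for all $x\in M$ (the radius from the base point, not the diameter $4$), your upper bound improves to $\norm{Ta}\le 2\norm{a}_1$, which combined with the lower bound $\tfrac12\norm{a}_1$ recovers exactly the paper's remark that the Banach--Mazur distance from $\mathcal F(M)$ to $\ell_1$ is at most $4$. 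The paper's route is shorter if one takes the standard properties of free spaces for granted, but proves strictly more in passing (the clean isometric identification of $\mathcal F(\Natural_0)$).
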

\begin{proof}
For the definition and basic properties of Lipschitz-free spaces see~\cite{GLZ} and the references therein.
The space $M$ embeds isometrically into $\mathcal F(M)$ where $\mathcal F(M)$ is the Lipschitz-free space over $M$. The result thus follows from the following fact.

\noindent
{\bf Fact:} If $(U,d)$ is a countable uniformly discrete bounded metric space then $\mathcal F(U)$ is isomorphic to $\ell_1$. \\
First, it is easy to observe that if we equip $\Natural_0=\set{0} \cup \Natural$ with the distance $\rho(0,x)=1$ for every $n \in \Natural$, and $\rho(x,y)=2$ for every $x\neq y \in \Natural$, then $\mathcal F(\Natural_0)$ is isometrically isomorphic to $\ell_1$. 
Second, it is clear that $(U,d)$ is Lipschitz homeomorphic to $(\Natural_0,\rho)$. 
The free spaces $\mathcal F(U)$ and $\mathcal F(\Natural_0)$ are thus linearly isomorphic. 
This finishes the proof of the fact and of the proposition.
\end{proof}

We do not know whether the Banach-Mazur distance between $\mathcal F(M)$ and $\ell_1$ is $2$. The above proof only shows that it is at most $4$ while Corollary~\ref{c:BadEmbedL1} shows that it is at least $2$.

\end{document}